\documentclass[11pt,a4paper]{article}
\usepackage{amsmath,amsthm}
\usepackage{amsfonts,amstext,amssymb, mathtools, comment}

\topmargin -0.2 cm \headheight 0cm \headsep 0cm \textheight 23.6
cm \textwidth 17 cm \oddsidemargin -0.5 cm \evensidemargin -0.5 cm

\newcommand{\p}{{\mathbb P}}
\newcommand{\e}{{\mathbb E}}
\renewcommand{\a}{\alpha}
\newcommand{\D}{\mathrm d}
\newcommand{\levy}{L\'{e}vy }

\newcommand{\R}{\mathbb R}
\renewcommand{\Re}{{\rm Re}}

\newcommand{\1}[1]{\mbox{\large  1}_{\{#1\}}}

\newcommand{\uX}{\underline X}
\newtheorem{thm}{Theorem}
\newtheorem{lem}[thm]{Lemma}

\newtheorem{cor}[thm]{Corollary}

\theoremstyle{remark}

\title{A new approach to fluctuations of a reflected \levy process}

\begin{document}
\bibliographystyle{plain}
\title{A new approach to fluctuations of reflected L\'{e}vy
processes}\author{Jevgenijs Ivanovs\footnote{Eurandom, Eindhoven
University of Technology and Korteweg-de Vries Institute for
Mathematics, University of Amsterdam}} \maketitle \abstract{We
present a new approach to fluctuation identities for reflected
\levy processes with one-sided jumps. This approach is based on a
number of easy to understand observations and does not involve
excursion theory or It\^{o} calculus. It also leads to more
general results.}

\vspace{0.1in}\noindent
AMS 2000 Subject classification: 60G51\\
Keywords: reflected \levy processes; first passage times; local
times; scale functions; two-sided reflection

\section{Introduction}
Let $X(t),t\geq 0$ be a \levy process with no positive jumps. We
consider $X(t)$ and $-X(t)$ reflected at 0, and study the times at
which these reflected processes pass over a certain level $B>0$.
In fact, for each reflected process we are interested in the joint
Laplace transform of the first passage time, the overshoot, and
the corresponding value of the local time at 0. In addition, it is
assumed that the initial value of $X(t)$ is shifted to an
arbitrary $x_0\in[0,B]$.

The main idea is to add an additional reflecting barrier at $B$.
That is, we consider a process $W(t),t\geq 0$ with values in
$[0,B]$ having the representation
\[W(t)=X(t)+L(t)-U(t),\]
where $L(t)$ and $U(t)$ are non-decreasing right-continuous
functions called the \emph{local times} at respectively the lower
and the upper barriers (that is at 0 and at $B$). In addition, it
is required that $L(0)=U(0)=0$ and the points of increase of
$L(t)$ and $U(t)$ are contained in the sets $\{t\geq 0:W(t)=0\}$
and $\{t\geq 0:W(t)=B\}$ respectively. It is known that the
triplet of functions $(W(t),L(t),U(t))$ exists and is unique, see
e.g.~\cite{ramanan_skorohod0a}, and is called the solution of the
two-sided Skorokhod problem. Letting $B=\infty$ we obtain a
one-sided reflection at 0, in which case $U(t)=0$ and $L(t)$ can
be given explicitly through $L(t)=-\min(\underline X(t),0)$, where
$\underline X(t)=\inf\{X(s):0\leq s\leq t\}.$

Define the inverse local times through
\begin{align}&\tau^L_x=\inf\{t\geq 0:L(t)>x\},&\tau^U_x=\inf\{t\geq 0:U(t)>x\}.\end{align}
Note that the one-sided reflection of $X(t)$ at 0 behaves as
$W(t)$ up to time $\tau^U_0$, its first passage time over level
$B$ is given by $\tau^U_0$, and the corresponding value of the
local time at 0 is $L(\tau^U_0)$. Absence of positive jumps
implies that $U(\tau_0^U)=0$, in other words, there is no
overshoot. One of our goals is to characterize the distribution of
$(\tau^U_0,L(\tau_0^U))$ for an arbitrary starting point
$x_0\in[0,B]$. Similarly, looking down from the level $B$ we see
that $(\tau_0^L,L(\tau_0^L),U(\tau_0^L))$ describes the first
passage time, the overshoot, and the local time at 0 of the
one-sided reflection of $-X(t)$ with the starting point $B-x_0$.
Our second goal is to characterize this triplet.

In order to achieve the above goals we study the process
$L(\tau_x^U),x\geq 0$, which turns out to be a compound Poisson
process with some specific distribution of $L(\tau_0^U)$. We
determine this initial distribution, the distribution of jumps and
the jump arrival rate. Based on these key results we derive the
Laplace transforms of $(\tau^U_0,L(\tau_0^U))$ and
$(\tau_0^L,L(\tau_0^L),U(\tau_0^L))$ for an arbitrary
$x_0\in[0,B]$. To our knowledge, these results are more general
than the existing ones. We advise the reader to have a look
at~\cite[Ch.\ 8.5]{kyprianou}, where the identities for reflected
\levy processes are presented and the relevant literature is
discussed. The main references
are~\cite{avram,pistorius,korolyuk}. The proofs in the cited
papers are rather involved and are based on It\^{o}'s excursion
theory, stochastic calculus and martingale calculations. A proof
based on direct excursion theory calculations is given
in~\cite{doney}. Our approach employs a number of easy to understand
observations and does not require the use of above mentioned heavy
machinery.

The following section contains some preliminary material. The main
results under a tilted measure are derived in
Section~\ref{sec:main}. Finally, Section~\ref{sec:last} contains
our results in their general form.

\section{Preliminaries}\label{sec:prelim}
Let $X(t),t\geq 0$ be a \levy process defined on $(\Omega,\mathcal
F,\{\mathcal F_t\}_{t\geq 0},\p^0)$. It is assumed that $X(t)$ has
no positive jumps, moreover, it is not a process with
a.s.~monotone paths. Then $\e^0 e^{\a X(t)}=e^{\phi(\a)t}$ for
some function $\phi(\a)$, called Laplace exponent, and
$\Re(\a)\geq 0$. It is useful to note that $\phi(\a)$ is analytic
in the domain specified by $\Re(\a)>0$. Restricting ourselves to
the real half line $\a \geq 0$, we note that $\phi(\a)$ is convex,
$\phi(\a)\rightarrow\infty$ as $\a\rightarrow\infty$, and $\e
X(1)=\phi_+'(0)$, where $\phi_+'$ denotes the right derivative of
$\phi$.

Pick $q>0$ and denote the unique positive solution of $\phi(\a)=q$
through $\Phi(q)$, which is sometimes called the right inverse of
$\phi(\a)$. Consider the Wald's martingale
$e^{\Phi(q)X(t)-qt},t\geq 0$ and the corresponding measure $\p$,
that is, a measure with Radon-Nikodym derivative
\begin{equation}\label{eq:RN}\frac{\D\p}{\D\p^0}|_{\mathcal F_t}=e^{\Phi(q)X(t)-qt}.\end{equation} It
is easy to see that $X(t)$ is a \levy process under $\p$ and its
Laplace exponent is
\begin{equation}\label{eq:exponent}\psi(\a)=\phi(\a+\Phi(q))-q.\end{equation} Hence $\psi(\a)$
is analytic in the neighborhood of 0, and
$\psi'(0)={\phi}'(\Phi(q))>0$ implying that~$X(t)$ has a positive
drift under~$\p$.

Let us proceed to so-called $q$-scale functions, which play a
central role in the theory of fluctuations of one-sided \levy
processes. It is known that there exists a unique strictly
increasing and continuous function $W^{(q)}(x),x\geq 0$ satisfying
$\int_0^\infty e^{-\a x}W^{(q)}(x)\D x=1/(\phi(\a)-q)$ for
$\a>\Phi(q)$. Such a function can be given explicitly through
\begin{align}\label{eq:scale_function}&W^{(q)}(x)=e^{\Phi(q)x}W(x), &W(x)=\frac{1}{\psi'(0)}\p(\uX\geq
-x),\end{align} where $\uX$ denotes the all-time infimum of
$X(t)$, see~\cite[Ch.~8.2]{kyprianou}. In addition, $W^{(0)}(x)$
is defined as a limit of $W^{(q)}(x)$ as $q\downarrow 0$.

Define the first passage times $\tau^\pm_x=\inf\{t\geq 0:\pm
X(t)>x\}$ for $x\geq 0$ and note using the strong Markov property
that
\begin{align*}&\p(\uX\geq -a)=\p(\tau_b^+<\tau^-_a)\p(\uX\geq
-(a+b)),& a,b\geq 0.\end{align*} Hence
$\p(\tau_b^+<\tau_a^-)=W(a)/W(a+b)$ for $a+b>0$. In terms of the
original measure $\p(\tau_b^+<\tau_a^-)$ reads as $
\e^0[e^{\Phi(q)X(\tau_b^+)-q\tau_b^+};\tau_b^+<\tau_a^-]$ and
hence
\[\e^0(e^{-q\tau_b^+};\tau_b^+<\tau_a^-)=\frac{W^{(q)}(a)}{W^{(q)}(a+b)}.\]
Finally, we note that $W(x)$ and hence also $W^{(q)}(x)$ have right derivatives for $x>0$.

\section{Results under the tilted measure $\p$}\label{sec:main}
The main object of our study is the process $L(\tau_x^U),x\geq 0$,
which has non-decreasing paths. Using the strong Markov property
we see that $L(\tau_x^U)$ is a \levy process. In fact, it is a
compound Poisson process, because it does not jump in a fixed
interval with positive probability. The following theorem
identifies this process.
\begin{thm}\label{thm:main}
The process $L(\tau_x^U),x\geq 0$ is a compound Poisson process
characterized by
\begin{align}
\psi^L(\a)=\frac{1}{x}\log \e e^{-\a [L(\tau_x^U)-L(\tau_0^U)]}=\frac{W(B)\psi(\a)}{Z(\a,B)}-\a,\\
\e_{x_0}e^{-\a
L(\tau^U_0)}=\frac{Z(\a,x_0)}{Z(\a,B)},\label{eq:m2}
\end{align}
where $\Re(\a)\geq 0$ and $Z(\a,x)=e^{\a
x}\left(1-\psi(\a)\int_0^x e^{-\a y}W(y)\D y\right)$.
\end{thm}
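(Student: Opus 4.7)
The plan is to establish three things: that $L(\tau_x^U)$ is a compound Poisson subordinator in $x$, the identity \eqref{eq:m2} for the starting distribution, and the Laplace exponent $\psi^L$. For the first, I would use the strong Markov property at $\tau_x^U$: since $X$ has no positive jumps and $U$ grows only at $B$, the barrier $B$ is reached continuously and $W(\tau_x^U)=B$ for every $x\ge 0$, so the increments $L(\tau_{x+y}^U)-L(\tau_x^U)$ are independent of the past and distributed as $L(\tau_y^U)$ started from $W=B$. Monotonicity of $L$ then makes this a subordinator, and compound-Poisson-ness follows from $\p_B(L(\tau_x^U)=0)>0$ for small $x$: under $\p$ the process $X$ has positive drift, so starting from $B$ there is positive probability that $W$ remains in $(0,B]$ over an initial interval during which $U$ nonetheless grows, ruling out a drift term and an infinite L\'evy measure.

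For \eqref{eq:m2}, I would decompose at the two-sided exit time $T:=\inf\{t:W(t)\in\{0,B\}\}$. On $\{W(T)=B\}$, which has probability $W(x_0)/W(B)$ by the preliminary exit identity and is reached continuously, $L(T)=0$ and $T=\tau_0^U$. On $\{W(T)=0\}$, $L(T)=-X(T)\ge 0$ is the downward overshoot of $X$ below $0$, and by strong Markov $L(\tau_0^U)-L(T)$ is an independent copy of $L(\tau_0^U)$ starting from $W=0$. Using the classical scale-function identity
\[\e_{x_0}[e^{\alpha X(T)};W(T)=0]=Z(\alpha,x_0)-\tfrac{W(x_0)}{W(B)}Z(\alpha,B),\]
derivable by optional stopping of $e^{\alpha X(t)-\psi(\alpha)t}$ at $T$ combined with the preliminary exit probability, I obtain
\[h(x_0):=\e_{x_0}e^{-\alpha L(\tau_0^U)}=\tfrac{W(x_0)}{W(B)}\bigl(1-Z(\alpha,B)h(0)\bigr)+Z(\alpha,x_0)h(0).\]
The constant $h(0)=1/Z(\alpha,B)$ is pinned down by self-consistency at $x_0=0$ in the bounded-variation case (where $W(0)>0$) and by a limit or approximation argument in the unbounded-variation case, and substitution produces $h(x_0)=Z(\alpha,x_0)/Z(\alpha,B)$.

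For $\psi^L$, the strong Markov property at $\tau_0^U$ combined with the L\'evy property gives $\e_{x_0}e^{-\alpha L(\tau_x^U)}=h(x_0)e^{x\psi^L(\alpha)}$. To identify $\psi^L$ I would apply optional stopping to $e^{\alpha X(t)-\psi(\alpha)t}$ at $\tau_x^U$, using $X(\tau_x^U)=B+x-x_0-L(\tau_x^U)$, to obtain $\e_{x_0}[e^{-\alpha L(\tau_x^U)-\psi(\alpha)\tau_x^U}]=e^{\alpha(x_0-B-x)}$, and then combine with a Laplace transform of the subordinator $\tau_x^U$ (or a small-$x$ expansion around $x=0$) to deconvolve and extract $\psi^L(\alpha)=W(B)\psi(\alpha)/Z(\alpha,B)-\alpha$; the derivative relation $\partial_xZ(\alpha,x)=\alpha Z(\alpha,x)-\psi(\alpha)W(x)$ produces the final algebraic form.

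I expect step~(iii) to be the main obstacle: the single martingale identity gives the joint transform of $(L(\tau_x^U),\tau_x^U)$ only along the one-parameter curve $q=\psi(\alpha)$, so isolating the $q=0$ marginal $\psi^L(\alpha)$ without recourse to excursion theory requires an additional deconvolution input, most likely a renewal argument for $\tau_x^U$ near $x=0$ or an independent scale-function identity. A secondary subtle point is closing the equation for $h(0)$ in the unbounded-variation case, where $W(0)=0$ renders the direct substitution $x_0=0$ tautological.
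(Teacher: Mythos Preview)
Your proposal has two genuine gaps that you partly anticipate but do not close. First, the overshoot identity
\[
\e_{x_0}\bigl[e^{\alpha X(T)};W(T)=0\bigr]=Z(\alpha,x_0)-\tfrac{W(x_0)}{W(B)}Z(\alpha,B)
\]
is \emph{not} obtainable by optional stopping of $e^{\alpha X(t)-\psi(\alpha)t}$ at $T$: that martingale entangles $X(T)$ with $T$ and leaves you with precisely the deconvolution problem you flag in step~(iii). The identity is correct, but its standard derivations pass through the classical $Z^{(q)}$ two-sided exit formula (itself proved via excursion theory or the compensation formula) combined with an Esscher transform and analytic continuation to $q=-\psi(\alpha)$ --- machinery at the same level as the theorem, and exactly what the paper sets out to avoid. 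Second, and more seriously, you offer no argument for $\psi^L$: the single relation $\e_{x_0}e^{-\alpha L(\tau_x^U)-\psi(\alpha)\tau_x^U}=e^{\alpha(x_0-B-x)}$ constrains the bivariate exponent of $(L(\tau_x^U),\tau_x^U)$ only along a one-dimensional curve, and neither a small-$x$ expansion nor a renewal argument for $\tau_x^U$ will separate the two unknowns without an independent input of comparable depth.

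The paper sidesteps both obstacles at once by computing, for small negative $\alpha$, the integral $\int_0^\infty \e_{x_0}e^{\alpha X(\tau_x^U)}\,\D x$ in two ways. Since $X(\tau_x^U)=B+x-L(\tau_x^U)$, the integral factors as $e^{\alpha B}\,\e_{x_0}e^{-\alpha L(\tau_0^U)}\big/\bigl(-\alpha-\psi^L(\alpha)\bigr)$, packaging \emph{both} unknowns into a single ratio. Independently, it is evaluated by a level-counting argument (Lemma~\ref{lem:tech}): for each $y\in\R$ one counts the times at which $X$ sits at level $y$ while $U$ begins to increase; because $X$ drifts to $+\infty$ under $\p$, the strong Markov property makes this expected count a geometric series in $\p(\uX\le -B)$, and Fubini together with the Pollaczek--Khinchine formula for $\e e^{\alpha\uX}$ (analytically continued to small negative $\alpha$) yields an explicit expression in $W$ and $\psi$. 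Equating the two evaluations and using the boundary condition $x_0=B$, where $\tau_0^U=0$ a.s., pins down the initial transform and $\psi^L$ simultaneously, with no two-sided overshoot formula and no deconvolution step.
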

This result is based on the following lemma, which is closely
related to the proof of~\cite[Theorem 4.1]{MMBM_2sided}, where a
Markov-modulated Brownian motion is considered. Similar ideas in a
simpler form also appear in~\cite[Section 5]{rogers}.
\begin{lem}\label{lem:tech}
It holds for $\a<0$ that \begin{align*}&\int_0^\infty\e_{x_0}
e^{\a X(\tau_x^U)}\D x\\&=\left(-e^{\a
B}/\a+e^{\a(B+x_0)}\int_{x_0}^\infty e^{-\a y}\p(\uX\leq -y)\D
y\right)/(1-\p(\uX\leq -B)).\end{align*}
\end{lem}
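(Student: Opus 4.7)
The plan is to reduce the integral to one against the local time $\D U$ along the path, and then compute it via the strong Markov property and first-passage identities for the underlying Lévy process. First, I would establish a path identity: absence of positive jumps of $X$ implies that $W$ has no positive jumps, so $U$ is continuous; since $U$ grows only on $\{W=B\}$, we get $U(\tau_x^U)=x$ and $W(\tau_x^U)=B$, hence the Skorokhod relation $W=X+L-U$ yields $X(\tau_x^U)=B+x-L(\tau_x^U)$. Using that $\tau^U$ is the right-continuous inverse of the continuous non-decreasing $U$, change of variables and Fubini give
\[
\int_0^\infty \e_{x_0} e^{\a X(\tau_x^U)}\D x = \e_{x_0}\int_0^\infty e^{\a X(t)}\D U(t).
\]

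Next, I would apply the strong Markov property at $\sigma_0:=\tau_0^U$, the first hitting time of $B$ by $W$. Since $X(\sigma_0)=B-L(\sigma_0)$, and by the strong Markov property combined with a translation of $X$ the shifted process is distributed as under $\p_B$, one obtains $J(x_0):=\e_{x_0}\int_0^\infty e^{\a X(t)}\D U(t)=\e_{x_0}[e^{-\a L(\sigma_0)}]\cdot J(B)$. Under $\p_B$, on $[0,\tau_0^W)$ we have $L\equiv 0$ and $U(t)=M(t)-B$ with $M(t)=\sup_{s\le t}X(s)$; moreover on the support of $\D U$ in that interval $X=M$, so
\[
\int_0^{\tau_0^W} e^{\a X(t)}\D U(t)=\int_B^{M(\tau_0^W)} e^{\a m}\,\D m=\frac{1}{\a}\bigl(e^{\a M(\tau_0^W)}-e^{\a B}\bigr).
\]
Applying the strong Markov property at $\tau_0^W$ (after which, by translation, the process restarts as under $\p_0$) and using the analogous factorisation $J(0)=\e_0[e^{-\a L(\sigma_0)}]\cdot J(B)$ from the preceding step, I obtain a closed renewal equation for $J(B)$; its solution, inserted back into the first factorisation, yields $J(x_0)$.

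Finally, I would express the resulting joint Laplace transforms (of $(M(\tau_0^W),X(\tau_0^W))$ under $\p_B$ and of $L(\tau_0^U)$ under $\p_{x_0}$) in terms of $\p(\uX\le -\cdot)$, using the preliminaries' identity $\psi'(0)W(y)=\p(\uX\ge -y)$ together with standard spectrally-negative first-passage formulas that express the overshoot density in the same way. After algebraic rearrangement the denominator $1-\p(\uX\le -B)=\psi'(0)W(B)$ emerges naturally as the characteristic of the renewal equation, while the numerator assembles into $-e^{\a B}/\a+e^{\a(B+x_0)}\int_{x_0}^\infty e^{-\a y}\p(\uX\le -y)\D y$. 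The main technical obstacle will be bookkeeping possible jumps of $L$ at $\tau_0^W$ produced by downward jumps of $X$ large enough to push $W$ below $0$; here one must track the joint transform $\e_B[e^{\a(X(\tau_0^W)+L(\tau_0^W))}]$ rather than the marginal in $X$ alone, but the overshoot contributions cancel cleanly in the final combination because $M(\tau_0^W)=X(\tau_0^W)+B+L(\tau_0^W)$ in either the continuous-hit or jump-over case.
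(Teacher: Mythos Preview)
Your renewal decomposition is a natural idea, and the change of variables $\int_0^\infty e^{\a X(\tau_x^U)}\,\D x=\int_0^\infty e^{\a X(t)}\,\D U(t)$ together with the factorisations $J(x_0)=\e_{x_0}[e^{-\a L(\tau_0^U)}]\,J(B)$ and $J(B)=\e_B\bigl[\tfrac1\a(e^{\a M(\tau_0^W)}-e^{\a B})\bigr]+\e_B[e^{\a X(\tau_0^W)}]\,J(0)$ are all correct. The difficulty is in the last step: to close the argument you explicitly need the Laplace transform $\e_{x_0}e^{-\a L(\tau_0^U)}$ (for $x_0$ general and in particular $x_0=0$). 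In the paper this transform is not a preliminary fact but the \emph{conclusion} of Theorem~\ref{thm:main}, equation~\eqref{eq:m2}, and Theorem~\ref{thm:main} is proved \emph{from} the present lemma. So within the paper's logical flow your argument is circular. Appealing to ``standard spectrally-negative first-passage formulas'' does not help here: the identity $\e_{x_0}e^{-\a L(\tau_0^U)}=Z(\a,x_0)/Z(\a,B)$ is exactly one of the fluctuation results for reflected processes that the paper sets out to derive by elementary means. Likewise, the joint law of $(M(\tau_0^W),X(\tau_0^W))$ under $\p_B$ --- the maximum and position at the first time the drawdown exceeds $B$ --- is usually obtained via excursion theory (the rate $W'_+(B)/W(B)$, overshoot distribution of the depth, independence of the height and the triggering excursion), which is precisely the machinery the paper is designed to avoid.

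For contrast, the paper's proof does not pass through any reflected first-passage transform at all. It fixes a level $y\in\R$ and counts the expected number of times $t$ at which $X(t)=y$ and $t$ is a point of increase of $U$; a simple strong Markov/geometric argument using only the event $\{\uX\le -B\}$ gives $\e_{x_0}|T_y|$ explicitly, with the factor $(1-\p(\uX\le -B))^{-1}$ arising as the expected number of renewals. Integrating $e^{\a y}$ against $\e_{x_0}|T_y|$ and matching it pathwise to $\int_0^\infty e^{\a X(\tau_x^U)}\D x$ (via $X(\tau_x^U)=-L(\tau_x^U)+x+B$ and the piecewise constancy of $x\mapsto L(\tau_x^U)$) yields the lemma directly. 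This route needs no knowledge of $\e_{x_0}e^{-\a L(\tau_0^U)}$, and indeed the lemma is then used to compute that transform.
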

It is noted that the expression in the lemma might not be finite.
\begin{proof}
Pick an arbitrary $y\in\R$ and consider the time points $t\geq 0$
such that $X(t)=y$ and $U(t-)<U(t+s),\forall s>0$ ($t$ is the
point of increase of $U$). We denote this set through $T_y$. If
$y\geq B$ then the first such point is $\tau^+_y$; if $y<B$ then
it is $\inf\{t\geq 0:\uX(t)\leq y-B,X(t)>y\}$. To justify the
second statement, note that $X(t)$ should first hit level $y-B$ to
guarantee that $W(t)=B$ at the time when $X(t)$ hits $y$; drawing
a picture might be helpful here. Recall that $X(t)$ has a positive
drift under $\p$, hence $T_y$ contains at least one point with
probability $1$ or $\p_{x_0}(\uX\leq y-B)$ corresponding to $y\geq
B$ and $y<B$. Use the strong Markov property to see that there are
at least $n\geq 1$ points in $T_y$ with probability $\p(\uX\leq
-B)^{n-1}$ or $\p_{x_0}(\uX\leq y-B)\p(\uX\leq -B)^{n-1}$
depending on $y$. Hence the expected number of points in $T_y$ is
\[\e_{x_0}|T_y|=\begin{cases}(1-\p(\uX\leq -B))^{-1},&y\geq B\\
 \p(\uX\leq y-B-x_0)(1-\p(\uX\leq -B))^{-1},&y<B.\end{cases}\]
Multiply both sides by $e^{\a y}$, where $\a<0$, and integrate
over $y$ to see that the right side is as stated in the lemma. It
therefore remains to show that
\[\int_{-\infty}^{\infty}e^{\a y}\e_{x_0}|T_y|\D
y=\int_0^\infty\e_{x_0} e^{\a X(\tau_x^U)}\D x.\]
 But,
$|T_y|=\sum_{x\geq 0}\1{X(\tau_x^U)=y}$ hence, in view of Fubini's
theorem, it is sufficient to show that
\[\int_{-\infty}^{\infty}\sum_{x\geq 0}e^{\a y}\1{X(\tau_x^U)=y}\D
y=\int_0^\infty e^{\a X(\tau_x^U)}\D x\] holds $\p$-a.s. For this
note that $X(\tau_x^U)=-L(\tau_x^U)+x+B$ and $L(\tau_x^U)$ is
piecewise constant. Let $L(\tau_x^U)=C$ on the interval $[S,F)$
then
\[\int_{-\infty}^{\infty}\sum_{x\in[S,F)}e^{\a y}\1{X(\tau_x^U)=y}\D
y=\int_S^F e^{\a X(\tau_x^U)}\D x.\] Summing over all such
intervals concludes the proof.
\end{proof}

\begin{proof}[Proof of Theorem~\ref{thm:main}]
We start with generalized Pollaczek-Khinchine formula:
\begin{align}\label{eq:PK}&\e e^{\a\uX}=\psi'(0)\a/\psi(\a),&\a>0,\end{align}
see also~\cite[p.\ 217]{kyprianou}. Let us show that this identity
can be extended to some negative values of $\a$. Let
\[r=\inf\{\a\in\R:\e e^{\a \uX}<\infty\},\] which is non-positive.
It is well known that $\e e^{\a\uX}$ is analytic for $\Re(\a)>r$.
Note that $\e e^{\a X(1)}$ is finite for $\a>r$, and hence
$\psi(\a)$ is analytic in the same domain. Therefore $\e
e^{\a\uX}\psi(\a)-\psi'(0)\a$ is identically 0 here, and
so~(\ref{eq:PK}) holds true for $\Re(\a)>r,\a\neq 0$.
Let us show that $r<0$. If $r=0$ then the right hand side of~(\ref{eq:PK}) should have a singularity at 0,
see e.g.~\cite[Ch.\ II, Thm.\ 5b]{widder}. This is not the case, because
$\psi(\a)$ is analytic in the neighborhood of 0 and $\psi'(0)>0$ according to representation~(\ref{eq:exponent}).

 Pick an arbitrary $\a\in(r,0)$ and
apply Fubini's theorem to see that
\begin{align*}\int_0^\infty e^{-\a y}\p(-\uX\geq y)\D
y=\int_{0-}^\infty\int_{0}^xe^{-\a y}\D y\p(-\uX\in \D
x)=\frac{1}{\a}(1-\e e^{\a\uX}),\end{align*} which is further
written as $1/\a-\psi'(0)/\psi(\a)$.
 Note also that
$\p(\uX\leq -y)=\p(\uX<-y)$ for $y>0$, because $X(t)$ can hit $-y$
only if it has paths of unbounded variation~\cite[Theorem
7.11]{kyprianou}, but then it will pass over $-y$ a.s. Then by the
definition of $W(y)$ we have $\p(\uX\leq -y)=1-\psi'(0)W(y)$ for
$y>0$, and thus
\[\int_0^{x_0} e^{-\a y}\p(\uX\leq -y)\D
y=\frac{1}{\a}(1-e^{-\a x_0})-\psi'(0)\int_0^{x_0}e^{-\a y}W(y)\D
y.\] This yields an expression for $\int_{x_0}^\infty e^{-\a
y}\p(\uX\leq -y)\D y$, which together with Lemma~\ref{lem:tech}
imply
\[\int_0^\infty\e_{x_0} e^{\a
X(\tau_x^U)}\D x=e^{\a(B+x_0)}\left(\int_0^{x_0}e^{-\a y}W(y)\D y
-\frac{1}{\psi(\a)}\right)/W(B).\]

Noting that $X(\tau_x^U)=-L(\tau_x^U)+x+B$ we obtain
\[\int_0^\infty\e_{x_0} e^{\a
X(\tau_x^U)}\D x=\e_{x_0}e^{-\a L(\tau_0^U)}e^{\a B}\int_0^\infty
e^{\a x+\psi^L(\a)x} \D x,\] which is convergent as shown above.
Therefore
\begin{equation*}\frac{\e_{x_0}e^{-\a
L(\tau_0^U)}}{\psi^L(\a)+\a}=\frac{Z(\a,x_0)}{W(B)\psi(\a)}
\end{equation*} for all $\a\in (r,0)$. The expressions in the statement of the theorem are
obtained by noting that $\tau_0^U=0$ a.s.\ given $x_0=B$. It
remains to show that these identities hold true for $\Re(\a)>r$.
Multiplied by $Z(\a,B)$ they indeed hold true, because the Laplace transform $\int_0^x e^{-\a
y}W(y)\D y$ is entire, see e.g.~\cite[Ch.\ II, Lem.\ 5]{widder}, and hence $Z(\a,x)$ is analytic in
$\Re(\a)>r$. Finally, if $Z(\a,B)=0$ for some $\a$ in the domain of interest then $\psi(\a)=0$ implying $Z(\a,B)=e^{\a B}\neq 0$, a contradiction.
\end{proof}

Theorem~\ref{thm:main} specifies the compound Poisson process
$L(\tau_x^U),x\geq 0$ uniquely, hence it can potentially be used
to obtain the jump arrival rate of this process. It is, however,
easier to give a direct argument. Note that a jump of
$L(\tau_x^U)$ corresponds to an excursion from the maximum of
height exceeding $B$. Such excursions arrive with rate
$W'_+(B)/W(B)$, see e.g.~\cite[p. 220]{kyprianou}, thus we have
the following result.
\begin{lem}\label{lem:rate}
The jump arrival rate of $L(\tau_x^U)$ is
$\lambda_L=W'_+(B)/W(B)$.
\end{lem}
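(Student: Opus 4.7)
The plan is to relate jumps of the process $x\mapsto L(\tau_x^U)$ to excursions of $W$ away from the upper barrier $B$, and then invoke a classical excursion-theoretic identity to read off their rate.

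First I would note that $\tau_x^U$ is the right-continuous inverse of the non-decreasing process $U$, so $\tau_{x-}^U<\tau_x^U$ precisely when $U$ is flat at level $x$. Such a flat interval is a maximal time interval $[s,t)$ during which $W$ performs an excursion away from $B$, and on it $L$ strictly increases iff $W$ touches $0$ (since $L$ grows only at $W=0$). Hence the jumps of $L(\tau_x^U)$ are in bijection with those excursions of $W$ from $B$ that reach $0$. Using the strong Markov property at $s$ together with $W=X+L-U$ and the fact that $L$ is constant throughout the excursion (until $W$ possibly hits $0$), we obtain $B-W(s+u)=X(s)-X(s+u)$ up to the first hitting time of $0$. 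Hence such an excursion of $W$ corresponds to an excursion of $\bar X - X$ from $0$ whose height exceeds $B$, where $\bar X$ denotes the running supremum of $X$.

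Next I would identify the local time. On any time interval where $W\equiv B$, the identity $W=X+L-U$ combined with the constancy of $L$ on $\{W=B\}$ yields $U(t)-U(s)=X(t)-X(s)$; since $X$ has no positive jumps this also forces $U$ to be continuous. Consequently $U$ acts as the local time of $X$ at its supremum under the normalization in which this local time coincides with $\bar X$. Applying the classical identity quoted in \cite[p.~220]{kyprianou}, the excursion point process of $\bar X - X$ admits rate $W'_+(B)/W(B)$ of excursions whose height exceeds $B$, yielding $\lambda_L=W'_+(B)/W(B)$ via the bijection established above.

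The main obstacle is calibrating the normalization: one has to check that $U$ parameterizes the point process of excursions in the same way as the local time of $X$ at its supremum presupposed by the cited formula. The identity $U(t)-U(s)=X(t)-X(s)$ on $\{W\equiv B\}$, a direct consequence of the absence of positive jumps and of the fact that $L$ cannot grow while $W=B$, is exactly what makes this calibration work.
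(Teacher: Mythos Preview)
Your argument is essentially the paper's main proof: a jump of $L(\tau_x^U)$ corresponds to an excursion from the running maximum of height exceeding $B$, and such excursions arrive at rate $W'_+(B)/W(B)$ by the cited identity in~\cite[p.~220]{kyprianou}. One small caveat on your calibration step: the identity $U(t)-U(s)=X(t)-X(s)$ on $\{W\equiv B\}$ does \emph{not} make $U$ globally equal to the local time of $X$ at its supremum, because after an excursion of $W$ that reaches $0$ and returns to $B$, the process $X$ sits strictly below its historical maximum (by exactly the accrued $L$) while $U$ resumes growing; what is true is that the identification $U=\bar X-\text{const}$ holds within each cycle between successive visits of $W$ to $0$, and this together with the strong Markov property you already invoke is precisely what the rate computation needs. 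The paper also sketches an alternative proof that avoids excursion theory altogether, sandwiching the jump probability on $[0,\epsilon]$ by two-sided exit probabilities and using $\p(\tau_B^-<\tau_\epsilon^+)=1-W(B)/W(B+\epsilon)$.
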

Let us sketch an alternative proof of the above lemma without an explicit use of excursion theory.
First, note that the probability that $L(\tau_x^U)$ jumps in $[0,\epsilon]$ is bounded from below by
$\p(\tau_B^-<\tau_\epsilon^+)=1-W(B)/W(B+\epsilon)$. This shows that $W'_+(B)/W(B)$ is the lower bound on the jump arrival rate. This lower bound is achieved if $\p(\tau_{B-\epsilon}^-<\tau_\epsilon^+)\p(\tau_B^+<\tau_\epsilon^-)/\epsilon$ goes to 0 as $\epsilon\downarrow 0$,
because of the strong Markov property. The alternative proof is completed after one observes that $\p(\tau_{B-\epsilon}^-<\tau_\epsilon^+)/\epsilon$ is bounded for small enough $\epsilon$.

Let us now present an important corollary of
Theorem~\ref{thm:main}.
\begin{cor}\label{cor:main}
It holds for $\a,\theta\geq 0$ that
\[\e_{x_0} e^{-\a L(\tau_0^L)-\theta U(\tau_0^L)}=Z(\a,x_0)+\frac{W(x_0)[W(B)\psi(\a)-(\a+\theta)Z(\a,B)]}{W'_+(B)+\theta W(B)}.\]
\end{cor}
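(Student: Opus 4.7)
The plan is to decompose $f(x_0):=\e_{x_0}e^{-\a L(\tau_0^L)-\theta U(\tau_0^L)}$ based on which of $0$ or $B$ is reached first by $W$. Since $U(\tau_0^L)=0$ on $\{\tau_0^L<\tau_0^U\}$, while on the complementary event $\{\tau_0^U<\tau_0^L\}$ strong Markov at $\tau_0^U$ (where $W=B$ and $L=U=0$) reduces the expectation to $f(B)$, and since $\p_{x_0}(\tau_0^U<\tau_0^L)=W(x_0)/W(B)$ by Section~\ref{sec:prelim}, we obtain
\[f(x_0)=\e_{x_0}[e^{-\a L(\tau_0^L)};\tau_0^L<\tau_0^U]+\frac{W(x_0)}{W(B)}f(B).\]
Only the two ingredients on the right remain to be computed.

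For the first ingredient I apply Theorem~\ref{thm:main} twice. Splitting $\e_{x_0}e^{-\a L(\tau_0^U)}=Z(\a,x_0)/Z(\a,B)$ according to which boundary is hit first, and exploiting $L(\tau_0^U)=0$ on $\{\tau_0^U<\tau_0^L\}$ together with strong Markov at $\tau_0^L$ (where $W=0$) on the other event---which gives $L(\tau_0^U)=L(\tau_0^L)+L'$ with $L'$ independent and distributed as $L(\tau_0^U)$ under $\p_0$ of Laplace transform $1/Z(\a,B)$ (since $Z(\a,0)=1$)---one solves to obtain
\[\e_{x_0}[e^{-\a L(\tau_0^L)};\tau_0^L<\tau_0^U]=Z(\a,x_0)-\frac{W(x_0)Z(\a,B)}{W(B)}.\]

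The heart of the proof is $f(B)$. By Theorem~\ref{thm:main} and Lemma~\ref{lem:rate}, under $\p_B$ the process $L(\tau_x^U)$ is compound Poisson with intensity $\lambda_L=W'_+(B)/W(B)$ and jumps of Laplace transform $\hat F(\a)=1+\psi^L(\a)/\lambda_L$. Noting $\{U(\tau_0^L)>u\}=\{L(\tau_u^U)=0\}$ and applying strong Markov at $\tau_u^U$ (where $W=B$ since $U$ is continuous) produces $\e_B[e^{-\a L(\tau_0^L)};U(\tau_0^L)>u]=e^{-\lambda_L u}\,\e_B e^{-\a L(\tau_0^L)}$, which reveals $L(\tau_0^L)\perp U(\tau_0^L)$ under $\p_B$ with $U(\tau_0^L)\sim\mathrm{Exp}(\lambda_L)$; thus $f(B)=\frac{\lambda_L}{\lambda_L+\theta}\,\e_B e^{-\a L(\tau_0^L)}$. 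To express $\e_B e^{-\a L(\tau_0^L)}$ through scale functions, strong Markov at $\tau_0^L$ decomposes the first compound Poisson jump as $J_1=L(\tau_0^L)+L''$ with $L''$ independent and distributed as $L(\tau_0^U)$ under $\p_0$, yielding $\e_B e^{-\a L(\tau_0^L)}=Z(\a,B)\hat F(\a)$.

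Substituting $f(B)$ into the decomposition and unravelling $\psi^L(\a)=W(B)\psi(\a)/Z(\a,B)-\a$ together with $\lambda_L W(B)=W'_+(B)$ reduces the claim to elementary algebra. The most delicate point, and the one I expect to be the main source of possible slip-ups, is the third paragraph: the overshoot $L(\tau_0^L)$ and the full compound Poisson jump $J_1$ are different, differing by the independent piece $L''$ that $W$ accumulates while climbing back from $0$ to $B$. The corresponding factor $Z(\a,B)$ relating their Laplace transforms is exactly what produces the $Z(\a,x_0)$-term in the stated identity.
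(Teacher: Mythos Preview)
Your proof is correct and follows essentially the same route as the paper: both hinge on the compound Poisson structure of $L(\tau_x^U)$ from Theorem~\ref{thm:main} and Lemma~\ref{lem:rate}, a decomposition according to which barrier is reached first, and the strong Markov relation at $\tau_0^L$ that links $(L(\tau_0^L),U(\tau_0^L))$ to the first jump of the compound Poisson process. The paper packages your third paragraph into the single identity $\e_{x_0}e^{-\a L(\tau_\Delta^U)-\theta\Delta}=f(x_0)\,\e_0 e^{-\a L(\tau_0^U)}$ (with $\Delta=\inf\{x:L(\tau_x^U)>0\}$) together with Lemma~\ref{lem:Poisson}, whereas you make the independence $L(\tau_0^L)\perp U(\tau_0^L)$ and the exponential law of $U(\tau_0^L)$ under $\p_B$ explicit; the content is the same.
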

\begin{proof}
Let $\Delta=\inf\{x\geq 0:L(\tau_x^U)>0\}$ then according to
Lemma~\ref{lem:Poisson} we have
\[\e_B e^{-\a L(\tau_\Delta^U)-\theta \Delta}=\frac{\lambda_L+\psi^L(\a)}{\lambda_L+\theta},\]
where $\a,\theta\geq 0$. Next we write
\begin{align*}
\e_{x_0}e^{-\a L(\tau_\Delta^U)-\theta\Delta}=\e_{x_0}[e^{-\a
L(\tau_0^U)};\tau_0^-<\tau_B^+]+\p_{x_0}(\tau_B^+<\tau_0^-)\e_B e^{-\a L(\tau_\Delta^U)-\theta \Delta}\\
=\e_{x_0}e^{-\a
L(\tau_0^U)}-\p_{x_0}(\tau_B^+<\tau_0^-)+\p_{x_0}(\tau_B^+<\tau_0^-)\e_B
e^{-\a L(\tau_\Delta^U)-\theta \Delta}.
\end{align*}
But the strong Markov property implies that
\[\e_{x_0} e^{-\a L(\tau_\Delta^U)-\theta \Delta}=\e_{x_0} e^{-\a L(\tau_0^L)-\theta U(\tau_0^L)}\e_0 e^{-\a L(\tau_0^U)}.\]
Hence
\[\e_{x_0} e^{-\a L(\tau_0^L)-\theta U(\tau_0^L)}=\left(\e_{x_0}e^{-\a
L(\tau_0^U)}+\p(\tau_{B-x_0}^+<\tau^-_{x_0})\frac{\psi^L(\a)-\theta}{\lambda_L+\theta}\right)/\e_0
e^{-\a L(\tau_0^U)}.\] Recall that
$\p(\tau_{B-x_0}^+<\tau^-_{x_0})=W(x_0)/W(B)$, and apply
Lemma~\ref{lem:rate} and Theorem~\ref{thm:main} to conclude the
proof.
\end{proof}

\section{Back to the original measure}\label{sec:last}
In this section we rewrite the results obtained in the previous
section in terms of the original measure $\p^0$.
\begin{thm}
It holds for $q>0,\a\geq \Phi(q),\theta\geq 0$ that
\begin{align*}
\e^0_{x_0}&e^{-\a L(\tau_0^L)-\theta
U(\tau_0^L)-q\tau_0^L}\\&=Z^{(q)}(\a,x_0)+\frac{W^{(q)}(x_0)[W^{(q)}(B)(\phi(\a)-q)-(\a+\theta)Z^{(q)}(\a,B)]}{{W^{(q)}}'_+(B)+\theta
W^{(q)}(B)},\\
\e^0_{x_0}& e^{-\a
L(\tau_0^U)-q\tau_0^U}=\frac{Z^{(q)}(\a,x_0)}{Z^{(q)}(\a,B)},
\end{align*}
where \[Z^{(q)}(\a,x)=e^{\a x}\left(1+(q-\phi(\a))\int_0^x e^{-\a
y}W^{(q)}(y)\D y\right).\] Moreover, the two-dimensional \levy
process $(L(\tau_x^U),\tau_x^U),x\geq 0$ is characterized by
\[\log\e^0 e^{-\a [L(\tau_1^U)-L(\tau_0^U)]-q[\tau_1^U-\tau_0^U]}=\frac{W^{(q)}(B)(\phi(\a)-q)}{Z^{(q)}(\a,B)}-\a\]
with $q> 0,\a\geq \Phi(q)$.
\end{thm}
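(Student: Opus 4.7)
The plan is to deduce each assertion from its $\p$-counterpart in Theorem~\ref{thm:main} and Corollary~\ref{cor:main} by applying the Wald-martingale change of measure~(\ref{eq:RN}) at the appropriate stopping times. In its shifted form this change of measure reads
\[\e^0_{x_0}[Y e^{-qT}]=e^{\Phi(q)x_0}\,\e_{x_0}[Y e^{-\Phi(q)X(T)}]\]
for any stopping time $T$ that is $\p$-a.s.\ finite and any nonnegative $\mathcal F_T$-measurable $Y$, with the left-hand side automatically vanishing on $\{T=\infty\}$. I would invoke this at $T\in\{\tau_0^L,\tau_0^U,\tau_1^U\}$; each is $\p$-a.s.\ finite because $X$ has positive drift $\psi'(0)>0$ under $\p$, forcing the $[0,B]$-valued process $W$ to visit both barriers infinitely often.

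The second step is to eliminate $X(T)$ via the Skorokhod identity $W(t)=X(t)+L(t)-U(t)$. Evaluated at the hitting times of the two barriers (using $U(\tau_x^U)=x$ together with the absence of positive-jump overshoots at the upper barrier), this yields
\[X(\tau_0^L)=-L(\tau_0^L)+U(\tau_0^L),\qquad X(\tau_x^U)=B+x-L(\tau_x^U).\]
Substituting into the change-of-measure identity turns each $\p^0_{x_0}$-Laplace transform into a $\p_{x_0}$-Laplace transform at the shifted arguments $(\a-\Phi(q),\theta+\Phi(q))$, multiplied by an explicit $\Phi(q)$-exponential prefactor in $x_0$ and $B$. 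The hypothesis $\a\geq\Phi(q)$ is precisely what keeps the shifted first argument in the nonnegative real half-line required by Theorem~\ref{thm:main} and Corollary~\ref{cor:main}.

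The third step is purely algebraic: rewriting the $\p$-outputs at the shifted arguments in terms of the $q$-scale quantities. The translation rules I would establish once are
\[\psi(\a-\Phi(q))=\phi(\a)-q,\quad W(x)=e^{-\Phi(q)x}W^{(q)}(x),\quad Z(\a-\Phi(q),x)=e^{-\Phi(q)x}Z^{(q)}(\a,x),\]
the last following directly from the definition of $Z$, together with the induced identity $W'_+(B)+(\theta+\Phi(q))W(B)=e^{-\Phi(q)B}({W^{(q)}}'_+(B)+\theta W^{(q)}(B))$. When these are substituted into the output of Step~2, all the $e^{\pm\Phi(q)\cdot}$ prefactors cancel and the first two displayed identities of the theorem drop out.

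Finally, the \levy property of $(L(\tau_x^U),\tau_x^U)$ under $\p^0$ follows from the strong Markov property of $X$ at the stopping time $\tau_x^U$, just as in the one-dimensional argument at the opening of Section~\ref{sec:main}. To identify the joint Laplace exponent I would specialize to $x_0=B$, where $\tau_0^U=L(\tau_0^U)=0$ a.s., and repeat Steps~1--3 at $T=\tau_1^U$; this yields $\log\e^0 e^{-\a L(\tau_1^U)-q\tau_1^U}=\psi^L(\a-\Phi(q))-\Phi(q)$, which collapses to the claimed expression after one final application of the translation rules. I anticipate no conceptual obstacle; the main task is the bookkeeping of the $e^{\pm\Phi(q)\cdot}$ prefactors through the substitutions $\a\mapsto\a-\Phi(q)$, $\theta\mapsto\theta+\Phi(q)$, $W\mapsto W^{(q)}$, $Z\mapsto Z^{(q)}$ so that everything cancels cleanly.
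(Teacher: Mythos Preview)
Your proposal is correct and follows essentially the same route as the paper: apply the Esscher change of measure~(\ref{eq:RN}) at the relevant stopping times, rewrite $X(T)$ via the Skorokhod decomposition so that the $\p$-expectation is a $\p$-Laplace transform at the shifted arguments $\a'=\a-\Phi(q)$, $\theta'=\theta+\Phi(q)$, and then translate $W,Z,\psi$ into $W^{(q)},Z^{(q)},\phi-q$. Your write-up is somewhat more explicit than the paper's (which proves only the first identity and declares the others analogous), but the substitutions, the derivative identity for $W'_+(B)$, and the cancellation of the $e^{\pm\Phi(q)\cdot}$ factors are exactly those the paper uses.
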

\begin{proof}
We only prove the first identity and note that the two other ones
follow in a similar way from Theorem~\ref{thm:main}. Consider
Corollary~\ref{cor:main} with $\a',\theta'\geq 0$ and write the
left hand side of the corresponding identity in terms of the
original measure $\p^0$:
\[\e^0_{x_0} e^{-\a' L(\tau_0^L)-\theta' U(\tau_0^L)}e^{\Phi(q)[X(\tau_0^L)-x_0]-q\tau_0^L}=e^{-\Phi(q)x_0}\e^0_{x_0}e^{-\a L(\tau_0^L)-\theta U(\tau_0^L)-q\tau_0^L},\]
where $\a=\a'+\Phi(q)$ and $\theta=\theta'-\Phi(q)$. Moreover, it
is easy to see that $e^{\Phi(q)x}Z(\a',x)=Z^{(q)}(\a,x)$ and
${W^{(q)}}'_+(B)=\Phi(q)W^{(q)}(B)+e^{\Phi(q)B}W'_+(B)$. Simple
algebraic manipulations complete the proof.
\end{proof}

We remark that the above identities also hold for $q=0$. Let
$q\downarrow 0$ to see this. The only technical difficulty arises
when considering the first identity. Namely, one has to show that
$\lim_{q\downarrow 0}{W^{(q)}}'_+(B)={W^{(0)}}'_+(B).$ This
difficulty is bypassed by noting that Lemma~\ref{lem:rate} and
then also Corollary~\ref{cor:main} hold under the original
measure.

\appendix
\section*{Appendix}
\begin{lem}\label{lem:Poisson}
Let $X(t)$ be a compound Poisson process with rate $\lambda$ and
Laplace exponent $\log\e e^{-\a X(1)}=\psi(\a)$. Let $J$ be the
time of the first jump of $X(t)$ then
\[\e e^{-\a X(J)-\theta J}=\frac{\lambda+\psi(\a)}{\lambda+\theta},\]
where $\a,\theta\geq 0$.
\end{lem}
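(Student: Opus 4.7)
The plan is to use the explicit structure of a compound Poisson process. Write $X(t)=\sum_{i=1}^{N(t)} Y_i$ where $N(t)$ is a Poisson process of rate $\lambda$ and $Y_1,Y_2,\ldots$ are i.i.d. jumps independent of $N$. Then the first jump time $J$ is the first epoch of $N$, so $J\sim\mathrm{Exp}(\lambda)$, and $X(J)=Y_1$; crucially, $J$ and $Y_1$ are independent.

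Next I would factor the joint Laplace transform as
\[
\e e^{-\a X(J)-\theta J}=\e e^{-\a Y_1}\cdot\e e^{-\theta J}=\e e^{-\a Y_1}\cdot\frac{\lambda}{\lambda+\theta}.
\]
It remains to express $\e e^{-\a Y_1}$ in terms of $\psi$. This follows from the standard compound Poisson identity
\[
\psi(\a)=\log\e e^{-\a X(1)}=\lambda\bigl(\e e^{-\a Y_1}-1\bigr),
\]
which after rearranging gives $\e e^{-\a Y_1}=(\lambda+\psi(\a))/\lambda$. Substituting yields the claimed formula.

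There is no real obstacle here; the only thing worth being careful about is the Laplace exponent identity, which comes from conditioning on $N(1)$ and using independence of the jumps (or equivalently from the fact that $X(1)\stackrel{d}{=}\sum_{i=1}^{N(1)}Y_i$ with $N(1)\sim\mathrm{Poisson}(\lambda)$). Since the statement assumes $\a,\theta\geq 0$, all expectations are well-defined without further integrability conditions.
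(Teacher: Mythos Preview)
Your argument is correct. It differs from the paper's, which introduces an independent exponential time $\eta$ of rate $\theta>0$, computes $\e e^{-\a X(\eta)}=\theta/(\theta-\psi(\a))$, and then decomposes this quantity according to whether the first jump occurs before or after $\eta$ (using memorylessness to factor the post-$J$ contribution as $\e[e^{-\a X(J)};J<\eta]\,\e e^{-\a X(\eta)}$); solving gives the formula for $\theta>0$, and $\theta=0$ is obtained by a limit. Your route is more elementary: by writing $X(J)=Y_1$ and using the independence of the first epoch $J\sim\mathrm{Exp}(\lambda)$ from $Y_1$, together with the compound Poisson identity $\psi(\a)=\lambda(\e e^{-\a Y_1}-1)$, you get the result in one step, including $\theta=0$, with no limiting argument. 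The paper's approach, by contrast, never unpacks the jump structure and works purely with $\psi$ and $\lambda$ via a renewal-type identity, which matches the flavour of the decompositions used elsewhere in the article; in terms of content the two proofs are equivalent, but yours is shorter and more transparent for this particular lemma.
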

\begin{proof}
Let $\eta$ be an exponential random variable of rate $\theta>0$
independent of everything else. Clearly, $\e e^{-\a
X(\eta)}=\theta/(\theta-\psi(\a))$ and $\p(J>\eta)=\e e^{-\lambda
\eta}=\theta/(\theta+\lambda)$. But we also have
\[\e e^{-\a X(\eta)}=\p(J>\eta)+\e [e^{-\a X(J)};J<\eta]\e[e^{-\a X(\eta)}],\]
which results in the statement of the Lemma for $\theta>0$. Taking
limits as $\theta\downarrow 0$ proves the case of $\theta=0$.
\end{proof}

\bibliography{levy_fluctuations}
\end{document}